\newcommand{\R}{\ensuremath{\mathbb{R}}}
\newcommand{\Z}{\ensuremath{\mathbb{Z}}}
\theoremstyle{plain}           	
\newtheorem{theorem}{Theorem}
\newtheorem{lemma}[theorem]{Lemma}
\newtheorem{conjecture}[theorem]{Conjecture}
\theoremstyle{remark}
\DeclareMathOperator*{\E}{E}
\DeclareMathOperator*{\p}{P}
\title{\vspace{-0.9cm}A remark on Hamilton cycles with few colors}
\date{}
\author{Igor Balla\thanks{Department of Mathematics, ETH, 8092 Zurich.} \textsuperscript{,}\thanks{igor.balla@math.ethz.ch.}
\and Alexey Pokrovskiy\footnotemark[1] \textsuperscript{,}\thanks{dr.alexey.pokrovskiy@gmail.com.} 
\and Benny Sudakov\footnotemark[1] \textsuperscript{,}\thanks{benjamin.sudakov@math.ethz.ch.} }
\begin{document}

\maketitle 

\begin{abstract}
Akbari,  Etesami, Mahini, and Mahmoody conjectured that every proper edge colouring of $K_n$  with $n$ colours contains a Hamilton cycle with $\leq O(\log n)$ colours. They proved that there is always a Hamilton cycle with $\leq 8\sqrt n$ colours. In this note we improve this bound to $O(\log^3 n)$.
\end{abstract}

An edge-colouring of $K_n$ is proper if no two edges of the same colour share a vertex. 
The smallest number of colours needed to properly edge-colour a graph $G$ is called the chromatic index of $G$, denoted by $\chi'(G)$. It is well known that $\chi'(K_n)=n-1$ if $n$ is even and $\chi'(K_n)=n$ is $n$ is odd.

Akbari,  Etesami, Mahini, and Mahmoody~\cite{AEMM} investigated cycles in properly edge coloured complete graphs. Specifically they looked for Hamilton cycles in properly coloured complete graphs which have either many or few colours (a Hamilton cycle in a graph is one which passes through every vertex.) When looking for Hamilton cycles with \emph{few} colours it is natural to bound the total number of colours in the properly coloured $K_n$. Otherwise, if one looks at a properly coloured $K_n$ with $\binom n2$ colours, then every Hamilton cycle trivially has exactly $n$ colours. Because of this Akbari et al. looked at properly coloured $K_n$ with $\chi'(K_n)$ colours. They made the following conjecture about how few colours one can have on a Hamilton cycle in such a colouring.

\begin{conjecture}[Akbari,  Etesami, Mahini, and Mahmoody~\cite{AEMM}]\label{Conjecture_AEMM}
Every properly edge-coloured $K_n$ with $\chi'(K_n)$ colours has a Hamilton cycle with $\leq O(\log n)$ colours. 
\end{conjecture}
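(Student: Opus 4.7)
The plan is to attempt Conjecture~\ref{Conjecture_AEMM} directly by random sampling. Fix a large constant $C$, set $k := C\log n$, and let $S$ be a uniformly random subset of $k$ of the $\chi'(K_n)$ colour classes. Let $G_S \subseteq K_n$ be the subgraph formed by the edges of colours in $S$; since each colour class is a matching, $G_S$ is $k$-regular. I would try to show that with positive probability $G_S$ is Hamiltonian: such a Hamilton cycle uses at most $k = O(\log n)$ colours, which is exactly the conjectured bound.

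To prove Hamiltonicity of $G_S$, I would verify a P\'osa-type expansion property: every $U \subseteq V(K_n)$ with $|U| \leq n/4$ satisfies $|N_{G_S}(U) \setminus U| \geq 2|U|$, and every two disjoint sets of size $\geq n/4$ span an edge of $G_S$. The classical rotation-extension method (or the expander-Hamiltonicity criterion of Krivelevich--Lee--Sudakov) then produces a Hamilton cycle. The starting computation is clean: for fixed $U$ of size $m$ and $v \notin U$, the $m$ edges between $v$ and $U$ have $m$ distinct colours by properness, so $\p[v \in N_{G_S}(U)] = 1 - \binom{\chi'(K_n)-m}{k}/\binom{\chi'(K_n)}{k}$, giving $\E|N_{G_S}(U)| = \Theta(\min(km,\,n))$, which comfortably exceeds $2m$ for all $m$ once $C$ is large enough.

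The main obstacle is turning this first-moment bound into a high-probability statement strong enough to union-bound over all $\binom{n}{m}$ choices of $U$ at every scale $m \leq n/4$. Swapping a single colour class in $S$ may flip up to $\Theta(m)$ of the indicators $\mathbf{1}\{v \in N_{G_S}(U)\}$ simultaneously, so McDiarmid's inequality loses roughly a $\log n$ factor relative to what is needed. The natural remedy is a Freedman-type martingale inequality obtained by exposing the colours of $S$ one at a time, together with a careful bound on the quadratic variation that exploits the fact that in a proper colouring each successive matching contributes to $N_{G_S}(U)$ a subset of $V\setminus U$ which is essentially uniformly spread. For very small $m$, where $|N_{G_S}(U)|$ cannot genuinely concentrate, one instead uses the negative correlation of the events $\{v \in N_{G_S}(U)\}$ in the random set $S$ together with a Janson-type inequality to show that the \emph{set} $N_{G_S}(U)$, rather than its cardinality, is large with the required probability.

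I expect the martingale step for intermediate $m$ to be the crux: one needs subgaussian concentration with variance proxy essentially $m$ rather than $m^2$, which seems to require a pseudorandomness property of $1$-factorizations of $K_n$ — roughly, that a random union of $O(\log n)$ colour classes of an \emph{arbitrary} proper edge colouring behaves like the binomial model $G(n,k/n)$ up to lower-order error. Isolating and proving such a pseudorandomness statement for arbitrary proper edge colourings, rather than just for random $1$-factorizations, appears to be the main new ingredient needed beyond the $O(\log^3 n)$ argument developed in this paper; once it is in hand, the random sample $G_S$ yields a Hamilton cycle and hence the conjectured bound.
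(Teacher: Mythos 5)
The statement you are addressing is a \emph{conjecture}: the paper does not prove it, and its main result (Theorem~\ref{main}) only achieves $O(\log^3 n)$ colours. Your proposal does not close that gap either --- it is a research programme whose central step you yourself flag as unproven. Concretely, the crux is the claim that for a random set $S$ of $k = C\log n$ colour classes of an \emph{arbitrary} proper colouring, the expansion $|N_{G_S}(U)\setminus U| \geq 2|U|$ holds simultaneously for all $U$ with $|U| \leq n/4$. Your first-moment computation of $\E|N_{G_S}(U)|$ is correct (the $m$ edges from $v$ into $U$ do carry $m$ distinct colours), but the step from there to a union bound over $\binom{n}{m} = 2^{\Theta(m\log(n/m))}$ sets at every scale requires per-set failure probabilities of order $2^{-\Theta(m\log(n/m))}$, and the events $\{v \in N_{G_S}(U)\}$ for distinct $v$ are strongly coupled through the shared colour choices: swapping one colour class in $S$ can move $\Theta(m)$ vertices into or out of $N_{G_S}(U)$ at once, exactly as you note. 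No off-the-shelf martingale or Janson inequality is known to deliver the required variance proxy for arbitrary $1$-factorizations, and the $\Z_2^k$ colouring shows the expansion genuinely fails for bad outcomes of $S$ (if $U$ is a subgroup and all chosen colours lie in $U$, then $N_{G_S}(U) = U$), so the argument cannot sidestep a nontrivial union bound over structured sets. ``Appears to be the main new ingredient needed'' is an accurate self-assessment, but it means the proposal is not a proof.

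It is worth noting why the paper's own method cannot be pushed to $O(\log n)$, since this clarifies that your switch from the spectral criterion to rotation--extension is forced rather than optional. For any $d$-regular graph on $n$ vertices, $\sum_{i\geq 2}\lambda_i^2 = d(n-d)$, so $\lambda(H) \geq \sqrt{d(n-d)/(n-1)} \approx \sqrt{d}$; the hypothesis of Theorem~\ref{Hamilton} demands $\lambda \leq d\,(\log\log n)^2/(1000\log n\,\log\log\log n)$, which is incompatible with $\lambda \geq \sqrt{d}$ once $d = O(\log n)$ (indeed for any $d$ much below $\log^2 n$). So the spectral route is structurally capped near $\log^2 n$, and any proof of the conjecture must, as you propose, use a Hamiltonicity criterion that tolerates $\lambda = \Theta(\sqrt{d})$ --- P\'osa rotation, absorption, or similar --- at the price of proving the pseudorandomness/concentration statement you have identified but not established. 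As it stands, your text is a reasonable roadmap toward the open conjecture, not a proof of it, and it should not be presented as resolving Conjecture~\ref{Conjecture_AEMM}.
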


To see that there are proper $\chi'(K_n)$-edge-coloured $K_n$  with no Hamilton cycles with less than $\log n$ colours, consider a colouring of the edges of the complete graph with vertex set $\Z_2^k$, where the edge $ij$ is coloured by colour $i+j$. Indeed, any Hamilton cycle of this graph contains $0$ and any other vertex $i$ is a sum of the colours of the edges on the path from $0$ to $i$. Thus the number of edge colours must be at least $\dim(\Z_2^k) = k$.

Towards Conjecture~\ref{Conjecture_AEMM}, Akbari et al. proved that every properly $\chi'(K_n)$-edge-coloured $K_n$ has a Hamilton cycle with $8\sqrt n$ colours~\cite{AEMM}. 
In this note we explain how to improve this to $O(\log^3 n)$.
\begin{theorem} \label{main}
For any sufficiently large $n$, any properly edge-colored $K_n$ with $\chi'(K_n)$ colors contains a Hamilton cycle with at most $O(\log^3{n})$ colours. 
\end{theorem}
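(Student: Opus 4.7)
The plan is to select a uniformly random $k$-element subset $R$ of the $\chi'(K_n) \in \{n-1, n\}$ colors, with $k = C \log^3 n$ for a sufficiently large absolute constant $C$, and argue that the union $G = G_R$ of the matchings indexed by $R$ contains a Hamilton cycle with positive probability. Because distinct color classes of a proper coloring are edge-disjoint, every vertex of $G$ has degree in $\{k-1, k\}$, so $G$ is essentially a $k$-regular random-like subgraph of $K_n$.

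The core combinatorial task is to verify that $G$ is a P\'osa expander: $|N_G(S) \setminus S| \geq 2|S|$ for every $S \subseteq V(K_n)$ with $|S| \leq n/3$. Once this is in hand, together with the (easily verified) connectivity and the existence of a few ``booster'' non-edges in $G$, the standard rotation--extension argument produces a Hamilton cycle in $G$, hence in $K_n$ using only colors of $R$. I split the verification of expansion into two regimes. For small sets, $|S| \leq (k-1)/3$, the minimum-degree bound alone already gives $|N_G(S) \setminus S| \geq (k-1) - |S| \geq 2|S|$. For larger $|S|$, I use the probabilistic structure of $R$: since the coloring is proper, the $|S|$ edges joining a vertex $v \notin S$ to $S$ carry $|S|$ distinct colors, and therefore
\[
\p(v \notin N_G(S)) \;=\; \binom{n - |S|}{k} \Big/ \binom{n}{k} \;\leq\; e^{-k|S|/n}.
\]
I would then concentrate the number of non-neighbors of $S$ outside $S$ via a bounded-differences estimate --- swapping a single color in $R$ changes this count by at most $2|S|$, since each matching meets $S \times (V \setminus S)$ in at most $|S|$ edges --- and conclude via a union bound over $S$.

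The main obstacle is the intermediate regime of $|S|$: too large for the minimum-degree bound to apply, but small enough that the bounded-differences tail, once paid against the union bound $\binom{n}{|S|}$, is only barely subpolynomial. Balancing these competing forces is where the extra logarithmic factors (relative to the conjectured $\log n$) seem to enter, and I expect the bulk of the technical work to lie in carefully navigating this range, perhaps by iteratively amplifying weaker expansion estimates or by combining the random-coloring argument with a direct structural argument inside $K_n$. Once the expansion is secured for all $|S| \leq n/3$, Hamiltonicity of $G$ is a black-box consequence.
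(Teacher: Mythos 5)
Your overall strategy (take a random set of $\Theta(\log^3 n)$ colour classes, then invoke a deterministic Hamiltonicity criterion) matches the paper's in spirit, but the specific route through P\'osa expansion has a gap that you yourself flag and that I do not believe can be closed as described. The problem is the regime of linear-sized sets, which the rotation--extension method genuinely needs: boosters require $|N_G(S)\setminus S|\ge 2|S|$ for all $S$ up to size about $n/3$, not just for small $S$. For $|S|=s$, your first-moment bound gives $\E[Z_S]\le n e^{-ks/n}$ for the number $Z_S$ of non-neighbours of $S$ outside $S$, and Markov plus a union bound over the $\binom{n}{s}$ choices of $S$ requires $ks/n\gtrsim s\log(n/s)$, i.e.\ $k\gtrsim n\log(n/s)$ --- hopeless for $k=C\log^3 n$. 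Your proposed fix, bounded differences with Lipschitz constant $\Theta(s)$ in each of the $k$ coordinates, gives a tail of the form $\exp(-\Theta(t^2/(ks^2)))$; to beat $\binom{n}{s}=\exp(\Theta(s\log(n/s)))$ with a deviation $t\le n$ one needs $k\lesssim n^2/(s^3\log(n/s))$, which already fails for $s\gg n^{2/3}$, and for $s$ near $n/3$ the required deviation $n-3s-\E[Z_S]$ can even be smaller than a single Lipschitz step. So the union-bound-over-sets approach collapses exactly where you predict it might, and ``iteratively amplifying weaker expansion estimates'' is a hope, not an argument.

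The paper sidesteps this entirely by working spectrally: it shows $\lambda(H)=\|A-\frac{d}{n}J\|\le \frac{2\log\log n}{\log n}\,d$ with high probability using an operator-valued (Ahlswede--Winter / Christofides--Markstr\"om) Hoeffding inequality applied to the matrix martingale $X_i=\sum_{l\le i}\frac12\left(A^{(l)}-\frac1n J\right)$; the matrix concentration inequality controls all directions simultaneously at the cost of only a factor $n$ (rather than $2^n$), and Hamiltonicity then follows from the Krivelevich--Sudakov theorem on $(n,d,\lambda)$-graphs. In effect the spectral bound delivers, via the expander mixing lemma, exactly the uniform-over-all-$S$ expansion that a vertex-by-vertex union bound cannot reach at this sparsity. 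If you want to stay elementary, you would need a Hamiltonicity criterion demanding expansion only of sublinear sets together with an ``edge between any two large disjoint sets'' condition (the latter is provable by your methods), or some comparable device; as written, the intermediate and large-set regime is a genuine missing step rather than routine technical work. You would also still need the paper's small reduction from odd $n$ (where the colour classes are near-perfect matchings) to even $n$.
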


To prove Theorem~\ref{main} we select a set of $\log^3 n$ colours at random and show that, with high probability, the subgraph consisting of these colours is Hamiltonian. The Hamiltonicity of this subgraph follows from the proof of Theorem 10 in Christofides and Markstrom \cite{CMLatinSquare}. First we show that, with high probability, all eigenvalues of this graph except the first one are small in absolute value, so that the graph is pseudo-random. Then a result of Krivelevich and Sudakov \cite{KS03} implies that such graphs are Hamiltonian. 

Given a $d$-regular graph $H$ with vertex set $\{1, \ldots, n\}$, let $A$ be the corresponding adjacency matrix, i.e.\ an $n \times n$ matrix such that $A_{i,j} = 1$ if $ij \in E(H)$ and $A_{i,j} = 0$ otherwise. Let $\lambda_1 \geq \lambda_2 \geq \ldots \geq \lambda_n$ be the eigenvalues of $A$. Then we have $d = \lambda_1$ and $|\lambda_i | \leq d$ for all $i \in [n]$. We define $\lambda(H) = \max_{i=2}^n{|\lambda_i |}$ and note that the smaller $\lambda(H)$ is, the more pseudo-random our graph $H$ is. For any $\lambda > 0$, we say that $H$ is an $(n,d,\lambda)$-graph if $\lambda(H) \leq \lambda$.

\begin{theorem}[Krivelevich and Sudakov \cite{KS03}] \label{Hamilton}
Let $H$ be an $(n,d,\lambda)$-graph. If $n$ is large enough and 
\[ \lambda \leq \frac{(\log{\log{n}})^2}{1000 \log{n} ( \log{\log{\log{n}}} )} d,\]
then $H$ is Hamiltonian.
\end{theorem}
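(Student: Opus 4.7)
The plan is to prove Theorem~\ref{Hamilton} by the classical Pósa rotation-extension method, with the pseudo-randomness hypothesis supplying the required expansion through the expander mixing lemma. Throughout, write $\alpha := \lambda/d$, so the hypothesis reads
\[\alpha \leq \frac{(\log\log n)^2}{1000 \log n \log\log\log n}.\]

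\textbf{Step 1: expansion from pseudo-randomness.} The expander mixing lemma states that for any $S,T\subseteq V(H)$,
\[\Bigl|e(S,T) - \tfrac{d|S||T|}{n}\Bigr|\leq \lambda\sqrt{|S||T|}.\]
Applying this with $T = V(H)\setminus(S\cup N(S))$, so that $e(S,T)=0$, gives $|S||T|\leq \alpha^2 n^2$, and hence $|S\cup N(S)|\geq n - \alpha^2 n^2/|S|$. In the useful range $|S|\in[2\alpha^2 n, n/2]$ this yields $|S\cup N(S)|\geq n/2$ immediately, and more generally shows that sets of moderate size expand by a factor of at least $2$. For very small sets ($|S|\ll \alpha^2 n$) the spectral bound is vacuous and one must exploit the regularity of $H$ (so that trivially $|N(S)|\leq d|S|$, and combine with counting arguments to bootstrap the iteration).

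\textbf{Step 2: long path and Pósa rotations.} Iterated expansion, combined with a depth-first search, shows that the longest path $P = v_0\ldots v_\ell$ in $H$ satisfies $\ell\geq n - o(n)$. Pósa's lemma then says that whenever $v_\ell v_i\in E(H)$ with $v_i\in V(P)$, the rotated sequence $v_0\ldots v_i v_\ell v_{\ell-1}\ldots v_{i+1}$ is again a longest path, now with endpoint $v_{i+1}$. Iterating produces the set $E_P$ of achievable endpoints, and a key structural fact is that every $w\in E_P$ satisfies $N(w)\subseteq V(P)$ (else $P$ could be extended). The rotation lemma applies expansion to $E_P$ over roughly $k = \Theta(\log n)$ levels: at each level, expansion forces $E_P$ to grow, and after sufficiently many levels one obtains $|E_P|\geq n^{1-o(1)}$ on \emph{both} sides of $P$. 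Expansion then guarantees a closing edge between the two endpoint sets, producing a Hamilton cycle (after one further step to upgrade a long cycle to a Hamilton cycle using the remaining vertices and their expansion).

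\textbf{Main obstacle.} The delicate part is the quantitative rotation bookkeeping. A single rotation does not expand $E_P$ by a factor $\alpha^{-1}$; rather, the $\lambda$-errors in the expander mixing lemma accumulate, and one has to carefully control the growth of $|E_P|$ level by level. Requiring $|E_P|$ to eventually reach order $n$ over $\Theta(\log n)$ rotation levels forces a bound of roughly $(\lambda/d)^{\log n}\cdot n \gtrsim 1$, which naively yields only the classical Pósa threshold $\lambda=O(d/\log n)$. The Krivelevich–Sudakov refinement to the stated $(\log\log n)^2/(\log n\log\log\log n)$ threshold comes from a sharper level-by-level analysis: one distinguishes ``generic'' levels, where endpoints expand nearly as efficiently as the spectral bound allows, from a small number of ``bottleneck'' levels where only a polylog-size factor is lost. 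Making this precise — and handling the auxiliary technicalities of initializing the path, bootstrapping expansion for sets of size below $\alpha^2 n$, and producing the final closing edge with enough slack — is what I expect to be the central technical challenge.
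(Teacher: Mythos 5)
This statement is not proved in the paper at all: it is Theorem 1 of Krivelevich and Sudakov \cite{KS03}, quoted verbatim and used as a black box, so there is no internal proof to compare yours against. Judged on its own terms, your proposal correctly identifies the framework of the actual proof in \cite{KS03} (P\'osa rotation--extension driven by spectral expansion), and Step 1's derivation of $|S\cup N(S)|\geq n-\alpha^2 n^2/|S|$ from the mixing lemma is sound. But as a proof it has a genuine gap, which you yourself flag: the entire quantitative content of the theorem is the improvement from the threshold your bookkeeping actually delivers to the stated $\lambda \leq \frac{(\log\log n)^2}{1000\log n\,\log\log\log n}\,d$, and your ``Main obstacle'' paragraph describes this refinement only as distinguishing ``generic'' from ``bottleneck'' rotation levels without carrying it out. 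That refinement is not a routine tightening; it is the theorem.

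Two further points in the sketch would need real work even before that. First, the expansion of small sets ($|S|\ll \alpha^2 n$, which here means sets of size up to roughly $n/\log^2 n$) is dismissed with ``combine with counting arguments to bootstrap''; the trivial bound $|N(S)|\leq d|S|$ you cite is an upper bound and gives nothing. What is actually needed is a Tanner-type lower bound of the form $|N(S)|\geq |S|\big/\bigl(\lambda^2/d^2+|S|/n\bigr)$, which for this range of parameters yields expansion by a factor of order $(d/\lambda)^2$, and this has to be threaded through the rotation argument. Second, the assertion that iterated expansion plus depth-first search already yields a path of length $n-o(n)$ is itself a nontrivial claim at this sparsity and is established in \cite{KS03} by the same rotation machinery, not as a preliminary step. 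In short: the skeleton matches the cited source, but the proposal does not constitute a proof of the stated bound, and for the purposes of this paper none is required --- the theorem is correctly attributed and imported.
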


Thus it will suffice to show that by choosing $d = \log^3 n$ colours at random, the graph $H$ we obtain is an $(n, d, \frac{2 \log{\log{n}}}{\log{n}}d)$-graph  with high probability. Actually, $H$ will only be regular when $n$ is even and so we first prove the result for even $n$.

\begin{lemma} \label{mainlemma}
For any sufficiently large even $n$, any properly edge-colored $K_n$ with $n-1$ colors contains a Hamilton cycle with at most $\log^3{n}$ colours. 
\end{lemma}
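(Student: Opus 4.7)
My plan is to implement the strategy outlined in the introduction. Since $n$ is even, $\chi'(K_n) = n-1$, so any proper edge-coloring with $n-1$ colors partitions $E(K_n)$ into $n-1$ color classes of $n/2$ edges each, i.e.\ into perfect matchings $M_1,\dots,M_{n-1}$. Set $d := \lfloor \log^3 n \rfloor$ and let $S \subseteq [n-1]$ be a uniformly random subset of size $d$. Form the graph $H := \bigcup_{i \in S} M_i$; since the $M_i$ are pairwise disjoint and each is a perfect matching, $H$ is $d$-regular on $n$ vertices, and any Hamilton cycle of $H$ uses at most $d \leq \log^3 n$ colors. So it suffices to show that $H$ is Hamiltonian with positive probability.

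For this I will apply Theorem~\ref{Hamilton}. Since $\log\log n / \log\log\log n \to \infty$, we have $\frac{2\log\log n}{\log n} \leq \frac{(\log\log n)^2}{1000 \log n \log\log\log n}$ for all sufficiently large $n$, so it is enough to prove that $\lambda(H) \leq \frac{2\log\log n}{\log n} d$ with probability bounded away from $0$. Let $A$ be the adjacency matrix of $H$ and $A_i$ that of $M_i$, and write $X_i := \mathbf{1}[i \in S]$, so that $A = \sum_i X_i A_i$. Using $\sum_{i=1}^{n-1} A_i = J - I$, this gives $\E[A] = \frac{d}{n-1}(J - I)$. The all-ones vector $\mathbf{1}$ is a common eigenvector of $A$ and $\E[A]$ with eigenvalue $d$, while on $\mathbf{1}^\perp$ the matrix $\E[A]$ acts as $-\frac{d}{n-1} I$. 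Hence $\lambda(H) \leq \|A - \E[A]\| + \frac{d}{n-1}$, so the task reduces to bounding $\|A - \E[A]\|$ by $(1-o(1))\frac{2\log\log n}{\log n} d$ with high probability.

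This spectral estimate will be obtained via the trace method, following the proof of Theorem 10 of Christofides and Markstrom~\cite{CMLatinSquare}. Set $B := A - \E[A] = \sum_i (X_i - d/(n-1)) A_i$ and pick an even integer $2k$ with $k$ of order $\log n / \log\log n$. Then $\|B\|^{2k} \leq \tr(B^{2k})$, and expanding $\tr(B^{2k})$ produces a sum over closed walks of length $2k$ in $K_n$ together with color assignments $(i_1,\dots,i_{2k}) \in [n-1]^{2k}$ to the edges of the walk, weighted by $\prod_j (X_{i_j} - d/(n-1))$. Two features control this sum: the matching structure (each vertex has exactly one neighbor of each color, so fixing the color word strongly restricts the walk), and the mean-zero, negatively associated coefficients arising from uniform size-$d$ sampling without replacement. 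Together these force most walks to contribute nothing in expectation. A careful count of the surviving walks, followed by Markov's inequality and a $2k$-th root, yields $\|B\| = O\bigl(\sqrt{d \log n / \log\log n}\bigr)$, which for $d = \log^3 n$ is comfortably smaller than $\frac{2\log\log n}{\log n}\,d$.

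The main obstacle is of course the walk-counting step inside the trace calculation. This is precisely the technical heart of~\cite{CMLatinSquare}, carried out there in the closely related setting of random Latin squares, and the same analysis transfers to our situation because it uses only (a) that each color class is a perfect matching and (b) the distribution of the random index set $S$; both hold here verbatim.
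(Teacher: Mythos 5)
Your setup is sound and in one respect cleaner than the paper's: sampling a size-$d$ subset $S$ without replacement makes $H$ a simple $d$-regular graph deterministically (the paper instead samples colours independently with replacement, throws in $A^{(l)}=I$ with probability $1/n$, and conditions on the $1-o(1)$ event that the result is simple and $d$-regular). Your reductions are also correct: each colour class is a perfect matching, $\lambda(H)\leq \|A-\E[A]\|+\frac{d}{n-1}$, and $\frac{2\log\log n}{\log n}\leq \frac{(\log\log n)^2}{1000\log n\,\log\log\log n}$ for large $n$, so everything does come down to showing $\|A-\E[A]\|=o\bigl(\frac{\log\log n}{\log n}d\bigr)$ with decent probability.

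That last step, however, is the entire content of the lemma, and you have not proved it. You propose the trace method, expand $\tr(B^{2k})$ over closed walks, and then assert that ``a careful count of the surviving walks'' yields $\|B\|=O(\sqrt{d\log n/\log\log n})$, deferring the count to the proof of Theorem 10 in \cite{CMLatinSquare}. But Christofides and Markstr\"om's argument (as used in this paper) is not a trace-method walk count: it is the operator Hoeffding inequality for self-adjoint-operator-valued martingales (\Cref{OpHoeffding}), applied to increments $Y_l=\frac12(A^{(l)}-\frac1nJ)\in[-\frac12 I,\frac12 I]$, which are mean zero precisely because of the with-replacement-plus-identity trick. So the argument you are pointing to does not exist in the form you describe, and the walk-counting you would need --- for sums of fixed matchings weighted by the dependent, negatively associated indicators $X_i-\frac{d}{n-1}$ --- is a genuinely substantial computation that you have not carried out. (Negative association of scalar coefficients does not by itself give operator-norm concentration, and for sampling without replacement one would need either a Hoeffding-type reduction to the with-replacement case at the operator level or a Doob/exposure martingale with controlled increments.) The fix is either to actually execute the trace computation, or to switch to the paper's sampling scheme so that the increments are independent, bounded, and mean zero, and invoke \Cref{OpHoeffding} with $t=\log\log n/\log n$, which gives $\p[\lambda(H)\geq 2dt]\leq 2n^{1-2(\log\log n)^2}\to 0$.
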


The case of odd $n$ can then be obtained from the above lemma as follows.

\begin{proof}[Proof of \Cref{main}]
The case of even $n$ is given by \Cref{mainlemma}. So let $n$ be odd and let a proper $n$ edge-colouring of $K_n$ be given. Note that each vertex $u$ has a color $c(u)$ which is not used by any of the edges incident to it. Moreover, since there are $n$ colors, each color $a$ must appear $(n-1)/2$ times and hence there is some vertex $u$ such that $c(u) = a$. Thus if we add a new vertex $v$ and for each $u \in K_n$ color the edge $uv$ with $c(u)$, we obtain a proper $n$ edge-colouring of $K_{n+1}$. By \Cref{mainlemma}, it has a Hamilton cycle with at most $\log^3(n+1)$ colours, which forms a Hamilton path on $K_n$. By connecting the endpoints of the path (possibly using an extra colour), we obtain a Hamilton cycle with at most $\log^3(n+1) + 1$ colours.
\end{proof}

The fact that $\lambda(H)$ is small will follow from an operator Hoeffding inequality for Hilbert spaces, obtained by Christofides and Markstrom~\cite{CMRandomCayley} (extending the work of \cite{AW}). To this end, let $V$ be a Hilbert space of dimension $d$ and $S(V)$ be the set of self adjoint operators on $V$. For any $A,B \in S(V)$, we define $A \leq B$ iff $B-A$ is positive semidefinite and define $[A, B] = \{C \in S(V) : A \leq C \leq B\}$. We let  $||A|| = \sup_{|v| = 1}{|Av|}$ denote the operator norm, where $|v| = \sqrt{\langle v, v \rangle}$.

\begin{theorem}[Operator Hoeffding \cite{CMRandomCayley, CMLatinSquare}] \label{OpHoeffding}
Let $V$ be a Hilbert space of dimension $n$ and let $0 = X_0, X_1, \ldots, X_d$ be a martingale taking values in $S(V)$, such that $X_i - X_{i-1} \in \left[-\frac{1}{2}I, \frac{1}{2}I\right]$ for all $i \in [d]$. Then for $0 < t < 1/2$,
\[ \p\left[||X_d - \E[X_d] || \geq dt\right] \leq 2n \exp(- 2 d t^2 ).\]
\end{theorem}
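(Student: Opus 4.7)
The plan is to follow the Ahlswede–Winter approach to matrix Chernoff bounds, adapted to the martingale setting. Since $X_0 = 0$ and $(X_i)$ is a martingale, $\E[X_d] = 0$, so it suffices to bound $\|X_d\|$. Because $X_d$ is self-adjoint, $\|X_d\| \geq dt$ means either $\lambda_{\max}(X_d) \geq dt$ or $\lambda_{\max}(-X_d) \geq dt$; by symmetry (the assumptions on the increments $Y_i := X_i - X_{i-1}$ are symmetric under $X \mapsto -X$) it suffices to bound the first event and double the resulting probability. So the goal reduces to showing
\[ \p\bigl[\lambda_{\max}(X_d) \geq dt\bigr] \leq n\exp(-2dt^2). \]

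The first step is a standard Markov-style trick at the trace level: for any $\theta > 0$, since $e^{\theta \lambda_{\max}(M)} \leq \tr(e^{\theta M})$ for self-adjoint $M$,
\[ \p\bigl[\lambda_{\max}(X_d) \geq dt\bigr] \leq e^{-\theta dt}\, \E\bigl[\tr(e^{\theta X_d})\bigr]. \]
The main work is to control $\E[\tr(e^{\theta X_d})]$ inductively in $d$. Writing $X_d = X_{d-1} + Y_d$ and conditioning on the filtration $\mathcal{F}_{d-1}$, the Golden–Thompson inequality $\tr(e^{A+B}) \leq \tr(e^A e^B)$ (valid for self-adjoint $A,B$) gives
\[ \E\bigl[\tr(e^{\theta X_d}) \mid \mathcal{F}_{d-1}\bigr] \leq \tr\Bigl(e^{\theta X_{d-1}}\, \E[e^{\theta Y_d} \mid \mathcal{F}_{d-1}]\Bigr) \leq \bigl\|\E[e^{\theta Y_d} \mid \mathcal{F}_{d-1}]\bigr\|\cdot \tr(e^{\theta X_{d-1}}), \]
using that $\tr(AB) \leq \|B\|\tr(A)$ when $A$ is positive semidefinite.

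The key lemma is then an operator version of the scalar Hoeffding bound for a martingale difference $Y$ with $Y \in [-\tfrac12 I, \tfrac12 I]$ and $\E[Y \mid \mathcal{F}] = 0$. The scalar inequality $e^{\theta x} \leq \tfrac{1/2 - x}{1}e^{-\theta/2} + \tfrac{x + 1/2}{1}e^{\theta/2}$, valid for $x \in [-1/2, 1/2]$ by convexity of $e^{\theta x}$, lifts to the operator inequality
\[ e^{\theta Y} \leq \tfrac12(I - 2Y)e^{-\theta/2} + \tfrac12(I + 2Y)e^{\theta/2} \]
by functional calculus applied in the eigenbasis of $Y$ (recall $Y$ is self-adjoint with eigenvalues in $[-1/2,1/2]$). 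Taking conditional expectation and using $\E[Y \mid \mathcal{F}] = 0$ yields
\[ \E[e^{\theta Y} \mid \mathcal{F}] \leq \cosh(\theta/2)\, I \leq e^{\theta^2/8}\, I, \]
so $\|\E[e^{\theta Y_d} \mid \mathcal{F}_{d-1}]\| \leq e^{\theta^2/8}$.

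Iterating the one-step bound $d$ times and using $\tr(e^{\theta X_0}) = \tr(I) = n$ gives $\E[\tr(e^{\theta X_d})] \leq n\, e^{d\theta^2/8}$. Combining with the Markov step,
\[ \p\bigl[\lambda_{\max}(X_d) \geq dt\bigr] \leq n\exp\!\bigl(-\theta dt + d\theta^2/8\bigr), \]
and the choice $\theta = 4t$ (which lies in the allowed range since $0 < t < 1/2$) optimizes the exponent to $-2dt^2$. Doubling for the other tail yields the stated bound.

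The main conceptual obstacle is the operator Hoeffding lemma: one must justify going from the scalar convexity inequality on $[-1/2,1/2]$ to the corresponding inequality on self-adjoint operators whose spectrum lies in $[-1/2,1/2]$, and then commute expectation past the operator inequality. This is where the assumption $X_i - X_{i-1} \in [-\tfrac12 I, \tfrac12 I]$ (in the operator-order sense) is crucial, as it guarantees the spectrum of each increment lies in $[-1/2,1/2]$ so the functional calculus argument applies.
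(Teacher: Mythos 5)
The paper does not prove this theorem; it imports it from Christofides--Markstr\"om (who extend Ahlswede--Winter), so there is no internal proof to compare against. Your argument is a correct, self-contained reconstruction of exactly that standard proof: reduction to one tail by symmetry of the hypotheses under $X\mapsto -X$, the trace-exponential Chernoff bound via $e^{\theta\lambda_{\max}(M)}\le\tr(e^{\theta M})$, Golden--Thompson plus $\tr(AB)\le\|B\|\tr(A)$ to peel off one increment at a time, the operator Hoeffding lemma obtained by lifting the scalar convexity bound through the functional calculus and using $\E[Y_d\mid\mathcal{F}_{d-1}]=0$ to get $\|\E[e^{\theta Y_d}\mid\mathcal{F}_{d-1}]\|\le e^{\theta^2/8}$, and finally the choice $\theta=4t$ giving the exponent $-2dt^2$ with the factor $n=\tr(I)$ from the base case. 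All steps check out.
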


\begin{proof}[Proof of \Cref{mainlemma}]
Given a proper edge-colouring $c:E(K_n)\rightarrow [n-1]$, let $d = \log^3{n}$ and let $c_1, \ldots, c_d$ be a sequence of colours chosen independently and uniformly at random from $[n-1]$. For each $l \in [d]$, let $A^{(l)}$ be the adjacency matrix of color $c_l$ with probability $\frac{n-1}{n}$ and $A^{(l)}=I$ with probability $1/n$. Let $A  = A^{(1)} + \ldots + A^{(d)}$ and note that the probability that all $d$ colours are distinct and all $A^{(l)}\not =I$ is $\frac{n-1}{n} \frac{n-2}{n} \ldots \frac{n-d}{n}=1-o(1)$. In this case, $A$ is the adjacency matrix of a simple, $d$-regular graph $H$.

Furthermore, we claim that $\lambda(H) = ||A - \frac{d}{n}J||$ where $J$ is the all 1's matrix. Indeed, letting $v_1, \ldots, v_n$ be an orthonormal basis of eigenvectors of $A$ ($v_1$ is the all 1's vector), with corresponding eigenvalues $\lambda_1 \geq \ldots \geq \lambda_n$, we have $(A - \frac{d}{n}J)v_1 = 0$ and $(A - \frac{d}{n}J)v_i = \lambda_i v_i$ for all $i \geq 2$.

Also note that for each $l \in [d]$, the eigenvalues $\mu_1 \geq \ldots \geq \mu_n$ of $A^{(l)}$ satisfy $\mu_1 = 1$ and $|\mu_i| \leq 1$ for $i \geq 2$. Moreover, $v = (1, \ldots, 1)$ is an eigenvector of $\mu_1$ and hence $Y_l = \frac{1}{2}(A^{(l)} - \frac{1}{n}J)$ has eigenvalues $0, \mu_2 / 2, \ldots, \mu_n / 2$, so that $Y_l\in \left[-\frac{1}{2}I,\frac{1}{2}I\right]$. Moreover, we have $E[Y_l] = 0$ which implies that $X_i = Y_1 + \ldots + Y_i$ is a martingale. Note also that $X_d = \frac{1}{2}(A - \frac{d}{n}J)$ and $X_i\in S(\R^n)$ for all $i$.  Thus, we can apply \Cref{OpHoeffding} with $t = \log{\log{n}} / \log{n}$ to conclude that
\[ \p\left[ \frac{1}{2}\lambda(H) \geq dt\right] =  \p\left[ ||X_d ||  \geq dt\right] \leq 2n \exp(-2d t^2) = 2n^{1 - 2 (\log{\log{n}})^2 } \rightarrow 0\]
as $n \rightarrow \infty$. Thus for $n$ large enough, we have that with high probability
\[ \lambda(H) \leq 2dt \leq \frac{(\log{\log{n}})^2}{1000 \log{n} ( \log{\log{\log{n}}} )} d, \]
and so in this case, we may apply \Cref{Hamilton} to conclude that $H$ has a Hamilton cycle.
\end{proof}

\noindent
{\bf Remark.}\, One can deduce the Hamiltonicity of the random set of $\log^3 n$ colours directly from the statement of Theorem
10 in \cite{CMLatinSquare}. We choose not to do so for the convenience of the reader and since the proof of Christofides and Markstrom in \cite{CMLatinSquare}
needs a version of Theorem \ref{Hamilton} that works for multi-graphs with self-loops. Although such a theorem should have a very similar proof to that in \cite{KS03}, it does not appear in the literature. We avoid this issue by observing that the graph we obtain is simple with high probability.

\end{document}